\documentclass[11pt]{amsart}
\usepackage{amssymb, amsbsy}
\usepackage{amsfonts, amscd}
\usepackage{mathtools}
\usepackage{geometry}
\numberwithin{equation}{section}

\usepackage[usenames,dvipsnames]{xcolor}
\usepackage{hyperref}
\hypersetup{
    colorlinks=true,
    citecolor=red,
    linkcolor=blue,
    filecolor=magenta,
    urlcolor=red,
}

\newcommand{\Fil}{\operatorname{Fil}}
\newcommand{\ord}{\operatorname{ord}}
\newcommand{\fm}{\mathfrak{m}}
\newcommand{\fa}{\mathfrak{a}}
\newcommand{\fb}{\mathfrak{b}}
\newcommand{\fc}{\mathfrak{c}}
\newcommand{\fp}{\mathfrak{p}}
\newcommand{\fq}{\mathfrak{q}}

\newtheorem{thm}{Theorem}[section]

\newtheorem{lem}[thm]{Lemma}

\theoremstyle{definition}

\newtheorem{ques}[thm]{Question}
\newtheorem{rem}[thm]{Remark}

\newtheorem{cons}[thm]{Construction}

\title{On two questions of Qi on saturated filtrations}

\author{Jihao Liu}
\address{Department of Mathematics, Peking University, No. 5 Yiheyuan Road, Haidian District, Beijing 100871, China}
\address{Beijing International Center for Mathematical Research, Peking University, No. 5 Yiheyuan Road, Haidian District, Beijing 100871, China}
\email{liujihao@math.pku.edu.cn}

\author{Haofeng Zhang}
\address{Department of Mathematics, Peking University, No. 5 Yiheyuan Road, Haidian District, Beijing 100871, China}
\address{Institute of Mathematics, Academy of Mathematics and Systems Science, Chinese Academy of Sciences, Beijing 100190, China}
\email{april@aprilg.moe}

\subjclass[2020]{13H15, 13A18, 14B05}
\keywords{filtration, multiplicity, metric geometry, valuation, monotone convergence, contractibility, analytically irreducible local domain}
\date{\today}

\begin{document}

\begin{abstract}
We answer two questions of Qi [J. Reine Angew. Math. 830 (2026)] on the metric space of saturated filtrations, one affirmatively and one negatively. The main result of this paper was obtained by Chatgpt 5.5 pro, and the Danus system based on the Rethlas system.
\end{abstract}

\maketitle

\section{Introduction}\label{sec:introduction}

Let $(R,\fm)$ be a Noetherian local ring of dimension $n$. An \emph{$\fm$-filtration} is a decreasing family $\fa_\bullet=(\fa_\lambda)_{\lambda \in \mathbb{R}^+}$ of $\fm$-primary ideals together with $\fa_0 := R$ satisfying $\fa_\lambda\fa_\mu\subseteq\fa_{\lambda+\mu}$ and $\fa_\lambda=\bigcap_{\mu<\lambda}\fa_\mu$. Its \emph{(normalized) multiplicity} is
\[
e(\fa_\bullet)=\lim_{m\to\infty}\frac{\ell(R/\fa_m)}{m^n/n!},
\]
which exists for linearly bounded filtrations by \cite[\S2.1.5]{Qi26} (see also \cite[Theorem~3.8]{LM09}). Following Qi \cite[Lemma~2.13 and \S2.2]{Qi26}, write $\Fil_{R,\fm}$ for the space of \emph{linearly bounded} $\fm$-filtrations, equivalently those of positive multiplicity, and $\Fil^s_{R,\fm}$ for the subspace of \emph{saturated} filtrations, that is, those that are intersections of valuation filtrations. On $\Fil_{R,\fm}$ Qi introduced the function
\begin{equation}\label{eq:d1}
d_1(\fa_\bullet,\fb_\bullet)=2e(\fa_\bullet\cap\fb_\bullet)-e(\fa_\bullet)-e(\fb_\bullet),
\end{equation}
a multiplicity-theoretic analogue of the Darvas $d_1$-metric in K\"ahler geometry~\cite{Dar15}; it is a pseudometric on $\Fil_{R,\fm}$ and restricts to a genuine metric on $\Fil^s_{R,\fm}$ \cite[Theorem~1.1]{Qi26}.

A decreasing sequence $\fa_{\bullet,1}\supseteq\fa_{\bullet,2}\supseteq\cdots$ in $\Fil^s_{R,\fm}$ always has a termwise intersection $\fa_\bullet=\bigcap_{k\geq 1}\fa_{\bullet,k}$, and when it is still an $\fm$-filtration, it is moreover saturated by \cite[Corollary~2.21]{Qi26}. It is natural to ask whether the metric $d_1$ is compatible with such monotone limits, and Qi raised the following.

\begin{ques}[{\cite[Question~6.5]{Qi26}}]\label{ques:monotone}
Let $\{\fa_{\bullet,k}\}_{k\geq 1}$ be a decreasing sequence in $\Fil^s_{R,\fm}$ such that $\fa_\bullet:=\bigcap_{k\geq 1}\fa_{\bullet,k}$ belongs to $\Fil^s_{R,\fm}$. Must $d_1(\fa_\bullet,\fa_{\bullet,k})\to 0$ as $k\to\infty$?
\end{ques}

We answer Question~\ref{ques:monotone} in the negative, already on the two-dimensional smooth local ring.

\begin{thm}\label{thm:monotone}
Let $R=\mathbb{C}[[x,y]]$ and $\fm=(x,y)$. Then there is a decreasing sequence $\{\fa_{\bullet,k}\}_{k\geq 1}$ in $\Fil^s_{R,\fm}$ whose termwise intersection $\fa_\bullet=\bigcap_{k\geq 1}\fa_{\bullet,k}$ lies in $\Fil^s_{R,\fm}$, yet
\[
d_1(\fa_\bullet,\fa_{\bullet,k})\longrightarrow\tfrac12\neq 0.
\]
In particular, monotone convergence fails for $(\Fil^s_{R,\fm},d_1)$.
\end{thm}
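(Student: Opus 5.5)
The plan is to exhibit the sequence explicitly with monomial data on $R=\mathbb{C}[[x,y]]$. Since each term will contain the intersection, the metric reduces to a difference of multiplicities: if $\fa_\bullet\subseteq\fa_{\bullet,k}$ termwise, then $\fa_\bullet\cap\fa_{\bullet,k}=\fa_\bullet$, so by \eqref{eq:d1}
\[
d_1(\fa_\bullet,\fa_{\bullet,k})=e(\fa_\bullet)-e(\fa_{\bullet,k}).
\]
It therefore suffices to construct a decreasing sequence with $e(\fa_{\bullet,k})=\tfrac12$ for every $k$ and with intersection of multiplicity $1$. The gap must come from a non-uniformity: for each fixed $k$, the constraint that distinguishes $\fa_\bullet$ from $\fa_{\bullet,k}$ must be active only for $\lambda\le k$. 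Such a constraint is invisible to the asymptotic quantity $e$, but it survives in the termwise intersection over all $k$.

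For the ingredients, let $v$ be the monomial valuation with $v(x)=1$ and $v(y)=2$, and let $\ord=\ord_\fm$. Their valuation filtrations
\[
\fb_\lambda=\{f: v(f)\ge\lambda\},\qquad \fm_\lambda:=\{f:\ord(f)\ge\lambda\}
\]
have multiplicities $1/(1\cdot 2)=\tfrac12$ and $1$, computed by counting lattice points under the lines $a+2b<m$ and $a+b<m$. Since $\ord\le v$, we have $\fm_\lambda\subseteq\fb_\lambda$. I would set
\[
\fa_{\lambda,k}:=\fb_\lambda\cap\fm_{\min(\lambda,k)}.
\]
This is a decreasing family of $\fm$-primary ideals, and it is multiplicative because $\min(\lambda,k)+\min(\mu,k)\ge\min(\lambda+\mu,k)$. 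It is also left-continuous in $\lambda$, it is decreasing in $k$, and it is linearly bounded since $\fm_{\lambda}\subseteq\fa_{\lambda,k}\subseteq\fb_\lambda$. For $m\ge k$ we have $\fa_{m,k}=\fb_m\cap\fm^k$. The monomials outside this ideal are those outside $\fb_m$ together with $O(k^2)$ extra monomials, so $e(\fa_{\bullet,k})=e(\fb_\bullet)=\tfrac12$. The termwise intersection is $\bigcap_k\fa_{\lambda,k}=\fb_\lambda\cap\fm_\lambda=\fm_\lambda$, which is saturated of multiplicity $1$. This gives $d_1(\fa_\bullet,\fa_{\bullet,k})=1-\tfrac12=\tfrac12$ for every $k$, so the limit is $\tfrac12$.

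The main obstacle is showing that each $\fa_{\bullet,k}$ lies in $\Fil^s_{R,\fm}$, i.e.\ that it is an intersection of valuation filtrations; the truncated filtration $\fm_{\min(\lambda,k)}$ is not obviously of this form. My first attempt would use the dual description of saturation from \cite[\S2]{Qi26}. This writes a saturated filtration as $\{f: w(f)\ge\varphi(w,\lambda)\}$ over a family of valuations $w$ with thresholds $\varphi$, and I would try to realize the constraint $\ord f\ge\min(\lambda,k)$ by the valuations $\ord$ and $\tfrac{\lambda}{k}\ord$ for the appropriate ranges of $\lambda$. Alternatively, I would check saturation directly in the two-dimensional monomial setting, where saturation should amount to each $\fa_{m,k}$ being an integrally closed monomial ideal compatible with the asymptotic Newton polygon.

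If the truncation fails to be saturated, the fallback keeps the same mechanism but changes the truncating filtration. I would replace $\fm_{\min(\lambda,k)}$ by the filtration of a quasi-monomial valuation $w_k$ with $w_k\ge\ord$ that agrees with $\ord$ on monomials of degree at most $k$ and becomes large beyond that, for instance a weight on $x,y$ adapted to a blow-up sequence of length $k$. The multiplicity count and the computation of the intersection would then be repeated as above.
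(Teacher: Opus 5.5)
\textbf{Your proposal has a fatal gap: the filtrations $\fa_{\bullet,k}$ are not saturated, and no variant of the same mechanism can be.} Any intersection of valuation filtrations $\fa_\lambda=\bigcap_{w\in\Sigma}\{f : w(f)\ge\lambda\}$ satisfies $f\in\fa_\lambda \iff f^m\in\fa_{m\lambda}$, because $w(f^m)=m\,w(f)$. Your truncation violates this.

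For $k\ge 2$ we have $y\notin\fa_{2,k}$, since $\ord(y)=1<2=\min(2,k)$. Yet for $m\ge k$ we have $y^m\in\fa_{2m,k}$, since $v(y^m)=2m$ and $\ord(y^m)=m\ge k=\min(2m,k)$.

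In fact $\fb_\lambda\fm^k\subseteq\fa_{\lambda,k}\subseteq\fb_\lambda$ for all $\lambda$. Take a valuation filtration $\fa_\bullet(w)\supseteq\fa_{\bullet,k}$, a $g\in\fb_\lambda$ and a nonzero $h\in\fm^k$. Then $g^mh\in\fa_{m\lambda,k}$, so $m\,w(g)+w(h)\ge m\lambda$ for every $m$, hence $w(g)\ge\lambda$. So every valuation filtration containing $\fa_{\bullet,k}$ already contains $\fb_\bullet$. The saturation of each $\fa_{\bullet,k}$ is therefore $\fb_\bullet$ itself, and saturating your sequence makes it constant, with intersection of multiplicity $\tfrac12$, not $1$.

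Your first idea, thresholds $\varphi(w,\lambda)$, cannot rescue this. Conditions coming from (rescaled) valuation filtrations have the form $w(f)\ge c_w\lambda$, linear in $\lambda$. A constraint ``active only for $\lambda\le k$'' is exactly what saturation forbids.

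The obstruction is structural, not technical. Suppose $\fa'_\bullet\subseteq\fb_\bullet$ are both saturated with $e(\fa'_\bullet)=e(\fb_\bullet)$. Then $d_1(\fa'_\bullet,\fb_\bullet)=0$ by \eqref{eq:containment}, so $\fa'_\bullet=\fb_\bullet$, because $d_1$ is a genuine metric on $\Fil^s_{R,\fm}$ (Qi's theorem). Hence a constraint ``invisible to $e$'' cannot exist among saturated filtrations. Your fallback explicitly keeps that mechanism, so it fails for the same reason. Moreover, the valuation $w_k$ it describes does not exist: a valuation with $w_k(x)=w_k(y)=1$ agrees with $\ord$ on all monomials, not just those of degree at most $k$.

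A counterexample instead needs a genuine jump of multiplicity in the limit, $\lim_k e(\fa_{\bullet,k})<e\bigl(\bigcap_k\fa_{\bullet,k}\bigr)$. The paper gets this from infinitely many valuations pointing in different directions. It takes $\fa_{\bullet,k}=\bigcap_{i\le k}\fa_\bullet(v_i)$, where $v_i$ is the monomial valuation with weights $(1,1+2^i)$ in $(x,y-t_ix)$ and the $t_i$ are distinct.
\begin{itemize}
\item These finite intersections of valuation filtrations are automatically saturated.
\item A degree-by-degree count, requiring each homogeneous part to be divisible by $\prod_i(y-t_ix)^{r_i}$, gives $e(\fa_{\bullet,k})=S_k/(1+S_k)$ with $S_k=\sum_{i\le k}2^{-i}$. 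Hence $e(\fa_{\bullet,k})\to\tfrac12$.
\item On the other hand, $\inf_i v_i=\ord_\fm$ pointwise, because the lowest form of any $f$ has only finitely many linear factors. So the intersection is $\fm_\bullet$, with multiplicity $1$.
\end{itemize}
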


The sequence is explicit. Fix pairwise distinct $t_i\in\mathbb{C}$, set $\ell_i=y-t_ix$ and $M_i=2^i$, and let $v_i$ be the monomial valuation with weights $(1,1+M_i)$ in the regular parameters $(x,\ell_i)$. The finite intersections $\fa_{\bullet,k}=\bigcap_{i=1}^k\fa_\bullet(v_i)$ form a decreasing sequence of saturated filtrations with $e(\fa_{\bullet,k})=\tfrac{1-2^{-k}}{2-2^{-k}}\to\tfrac12$. Because the linear forms $\ell_i$ point in infinitely many directions, the termwise intersection collapses to the $\fm$-adic filtration $\fm_\bullet$, which is saturated of multiplicity $1$; since $\fm_\bullet\subseteq\fa_{\bullet,k}$, the containment formula \eqref{eq:containment} gives $d_1(\fm_\bullet,\fa_{\bullet,k})=1-e(\fa_{\bullet,k})\to\tfrac12$. Thus the intersection sits at $d_1$-distance bounded away from every term of the sequence.

Qi also proved that, when $R$ is analytically irreducible, $(\Fil^s_{R,\fm},d_1)$ is a geodesic metric space, and asked the following.

\begin{ques}[{\cite[Question~6.6]{Qi26}}]\label{ques:contractible}
Is the metric space $(\Fil^s_{R,\fm},d_1)$ contractible?
\end{ques}

We answer Question~\ref{ques:contractible} affirmatively, in full generality.

\begin{thm}\label{thm:contractible}
Let $(R,\fm)$ be a Noetherian analytically irreducible local domain. Then the metric space $(\Fil^s_{R,\fm},d_1)$ is contractible.
\end{thm}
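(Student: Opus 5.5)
The plan is to build an explicit contraction onto a fixed point of $\Fil^s_{R,\fm}$ using two operations that preserve saturation: intersecting with a fixed valuation filtration, and rescaling the index.

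\emph{Setup.} Fix a divisorial (or any Abhyankar) valuation $v$ centred at $\fm$, and put $\fb^{(r)}_\lambda=\{f: v(f)\geq \lambda/r\}$ for $r>0$. This is a saturated filtration, and $e(\fb^{(r)})=r^{-n}e(\fb^{(1)})$. For $\fa_\bullet$ and $s>0$, set $\fa^{(s)}_\lambda=\fa_{\lambda/s}$, which is again saturated with $e(\fa^{(s)})=s^{-n}e(\fa)$. Intersections of saturated filtrations are saturated, being intersections of valuation filtrations.

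\emph{The homotopy.} Run two stages.
\begin{itemize}
\item[(i)] $H_1(\fa,r)=\fa_\bullet\cap\fb^{(r)}$ for $r\in[1,\infty]$, with $H_1(\fa,\infty)=\fa_\bullet$.
\item[(ii)] $H_2(\fa,s)=\fa^{(s)}\cap\fb^{(1)}$ for $s\in[1,\infty]$, with $H_2(\fa,\infty)=\fb^{(1)}$.
\end{itemize}
The two stages agree at $r=s=1$. Reparametrizing $[1,\infty]\cong[0,1]$ and concatenating gives a map $\Fil^s\times[0,1]\to\Fil^s$ from the identity to the constant map at $\fb^{(1)}$.

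\emph{Why the endpoints come out right.} By linear boundedness, $\fm^{\lceil c\lambda\rceil}\subseteq\fa_\lambda\subseteq\fm^{\lceil\lambda/C\rceil}$. Since $v(f)\geq v(\fm)\,\mathrm{ord}_\fm(f)$, we get $\fa_\lambda\subseteq\fb^{(r)}_\lambda$ for $r\gg0$, so $H_1(\fa,r)=\fa$ for large $r$. By Izumi--Rees, which needs analytic irreducibility, $v\leq K\,\mathrm{ord}_\fm$ on $R$. Hence $\fb^{(1)}_\lambda\subseteq\fm^{\lceil\lambda/K'\rceil}\subseteq\fa_{\lambda/s}$ for $s\gg0$, so $H_2(\fa,s)=\fb^{(1)}$ for large $s$. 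These thresholds depend on $\fa$, which is why an estimate is needed for continuity at $\infty$.

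\emph{Continuity.} The key lemma is 1-Lipschitz-ness of intersection:
\[
d_1(\fa\cap\fc,\fa'\cap\fc)\leq d_1(\fa,\fa')\qquad\text{for all } \fa,\fa',\fc\in\Fil^s_{R,\fm}.
\]
Granting it, continuity in the filtration variable is uniform in the parameter. Continuity in the parameter at finite values follows from
\[
d_1(\fa\cap\fb^{(r)},\fa\cap\fb^{(r')})\leq d_1(\fb^{(r)},\fb^{(r')})=|r^{-n}-r'^{-n}|\,e(\fb^{(1)}),
\]
using the containment formula \eqref{eq:containment}. The analogous bound for the scaling $\fa^{(s)}$ is $d_1(\fa^{(s)},\fa^{(s')})=|s^{-n}-s'^{-n}|\,e(\fa)$, again by containment, and $e(\fa'^{(s)})$ is locally bounded. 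Joint continuity at $(\fa,\infty)$ then follows from the triangle inequality. For example,
\[
d_1(\fa'\cap\fb^{(r)},\fa)\leq d_1(\fa'\cap\fb^{(r)},\fa\cap\fb^{(r)})+d_1(\fa\cap\fb^{(r)},\fa)\leq d_1(\fa',\fa)+0
\]
for $r$ large depending only on $\fa$. Stage (ii) is handled the same way, using $d_1(\fa'^{(s)},\fa^{(s)})=s^{-n}d_1(\fa',\fa)$, which holds because intersection commutes with scaling.

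\emph{Main obstacle.} The hard step is the Lipschitz lemma. Its proof should go through Qi's description of saturated filtrations via valuations, or through an Okounkov-body/volume description available for analytically irreducible $R$. The target is the submodularity-type inequality
\[
e(\fa\cap\fa'\cap\fc)-e(\fa\cap\fc)\leq e(\fa\cap\fa')-e(\fa),
\]
together with its symmetric counterpart. Adding the two gives $d_1(\fa\cap\fc,\fa'\cap\fc)\leq d_1(\fa,\fa')$, after using the triangle inequality through $\fa\cap\fa'\cap\fc$ and the containment formula. I would first try to derive the submodularity from Qi's proof of the triangle inequality for $d_1$, since it plays the role of the Darvas "Pythagorean" estimate there. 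If that fails, I would try to derive it directly from the limit formula for $e$ by comparing colengths of $\fa_m\cap\fc_m$ and $\fa_m\cap\fa'_m\cap\fc_m$.
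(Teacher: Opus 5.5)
Your proposal is correct. Its key lemma is the same as the paper's, but the homotopy is organized differently.

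\textbf{The paper's route.} The paper uses the single family $H(\fa_\bullet,t)=T_t\fa_\bullet\cap T_{1-t}o_\bullet$, which rescales $\fa_\bullet$ and the base point at the same time. It never needs $H$ to reach its endpoints exactly. Continuity at $t=0,1$ comes from $e(\fp_\bullet\cap\fq_\bullet)\le e(\fp_\bullet)+e(\fq_\bullet)$ together with $e(T_t\fa_\bullet)=t^n e(\fa_\bullet)\to 0$. This gives explicit bounds depending only on $e(\fa_\bullet)$ and $e(o_\bullet)$. It needs no comparison with $\fm$-adic filtrations and no Izumi-type input beyond $o_\bullet\in\Fil^s_{R,\fm}$.

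\textbf{Your route.} You split the contraction into two stages: first intersect with $\fb^{(r)}$ as $r$ runs from $\infty$ down to $1$, then scale $\fa_\bullet$ away. You handle the endpoints by exact stabilization with $\fa_\bullet$-dependent thresholds, and carry this to nearby points by the $1$-Lipschitz property. This is equally valid, and your homotopy is eventually stationary at both ends. The cost is the two-sided $\fm$-adic sandwich plus Izumi--Rees for $v$. The Izumi--Rees input is avoidable: with $\fb^{(r)}:=T_{1/r}\fc$, any fixed $\fc_\bullet\in\Fil^s_{R,\fm}$ (for instance $o_\bullet$) can replace $\fb^{(1)}$.

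\textbf{One justification needs correcting.} The inclusion $\fm^{\lceil c\lambda\rceil}\subseteq\fa_\lambda$ does not follow from linear boundedness: $\fa_\lambda=\fm^{2\lceil\lambda\rceil}$ violates it for small $\lambda$. It does follow from saturation. If $\fa_\bullet=\bigcap_{w\in\Sigma}\fa_\bullet(w)$ and $\fm^N\subseteq\fa_1$, then $w(\fm)\ge 1/N$, so $w\ge\ord_\fm/N$ for every $w\in\Sigma$. Alternatively, at $s=\infty$ you can bypass stabilization with the paper's type of estimate,
$d_1(\fa^{(s)}\cap\fb^{(1)},\fb^{(1)})\le e(\fa^{(s)})=s^{-n}e(\fa_\bullet)$.

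\textbf{Your ``main obstacle'' is already available.} It is exactly the rooftop estimate \eqref{eq:rooftop}, which the paper quotes from \cite[Lemma~3.3]{Qi26}. Your second route proves it immediately. Put $I=\fa_m\cap\fc_m$ and $J=\fa_m\cap\fa'_m$, so that $I+J\subseteq\fa_m$. The modular law $\ell(R/(I\cap J))+\ell(R/(I+J))=\ell(R/I)+\ell(R/J)$ then gives $\ell(R/(I\cap J))\le\ell(R/I)+\ell(R/J)-\ell(R/\fa_m)$. In the limit this is your submodularity inequality. Adding it to its symmetric counterpart yields \eqref{eq:rooftop} directly, with no triangle inequality needed, because $(\fa_\bullet\cap\fc_\bullet)\cap(\fa'_\bullet\cap\fc_\bullet)=\fa_\bullet\cap\fa'_\bullet\cap\fc_\bullet$.
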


The proof is an explicit contraction (null-homotopy of the identity) to the saturated $\fm$-adic filtration $o_\bullet=\widetilde{\fm^\bullet}$. Writing $T_c$ for the rescaling $T_c(\fa_\bullet)_\lambda=\fa_{c\lambda}$, we contract along
\[
H(\fa_\bullet,t)=T_t\fa_\bullet\cap T_{1-t}o_\bullet,\qquad 0<t<1.
\]
The scaling identity $e(T_c\fa_\bullet)=c^n e(\fa_\bullet)$ and the rooftop inequality for $d_1$ make $H$ Lipschitz in the space variable and continuous in time, with $H(\,\cdot\,,0)\equiv o_\bullet$ and $H(\,\cdot\,,1)=\mathrm{id}$. In particular the argument uses neither a field assumption nor the geodesic structure of $\Fil^s_{R,\fm}$.

\begin{rem}
The sketch of the proof of the main result of this paper was obtained by Chatgpt 5.5 pro, and later summed up, verified, and properly written by the Danus system, a specialized agent built on Rethlas and substantially more capable for fundamental mathematical research based on the Rethlas system. Human verification and polishing were done afterwards. See \cite{Ju26} for a detailed introduction to the Rethlas system. Due to the limitation of automated systems, it is possible that we have missed some related references in the literature, and we welcome any comments from experts.
\end{rem}

\subsection*{Acknowledgements}
The first author was partially supported by the National Key R\&D Program of China \#\allowbreak 2024YFA1014400. The first author would like to thank the Rethlas team, namely Haocheng Ju, Jiedong Jiang, Shurui Liu, Guoxiong Gao, Yuefeng Wang, Zeming Sun, Bin Wu, Liang Xiao, and Bin Dong, for their contributions to the development of Rethlas and its customized version used for the problem studied in this paper. The authors would like to thank Yuan Lu for useful discussions. The first author would like to thank Ruochuan Liu and Gang Tian for constant support and encouragement. The second author would like to thank Jie Liu for constant support.

\section{Preliminaries}\label{sec:preliminaries}

In this section, we collect the facts from \cite{Qi26} that will be used in the proofs of Theorems~\ref{thm:monotone} and~\ref{thm:contractible}. We will use freely that $d_1$ is a pseudometric on $\Fil_{R,\fm}$ that restricts to a metric on $\Fil^s_{R,\fm}$. For $\fa_\bullet\subseteq\fb_\bullet$ we have
\begin{equation}\label{eq:containment}
d_1(\fa_\bullet,\fb_\bullet)=e(\fa_\bullet)-e(\fb_\bullet).
\end{equation}
We also use that a filtration lies in $\Fil^s_{R,\fm}$ if and only if it is an intersection of valuation filtrations $\fa_\bullet(v)$ with $v$ centered at $\fm$ of positive volume \cite[Proposition~2.20]{Qi26}, and that an intersection of saturated filtrations is saturated whenever it remains linearly bounded \cite[Corollary~2.21]{Qi26}. Finally, we use the rooftop estimate
\begin{equation}\label{eq:rooftop}
d_1(\fc_\bullet\cap\fa_\bullet,\fc_\bullet\cap\fb_\bullet)\leq d_1(\fa_\bullet,\fb_\bullet)
\end{equation}
from \cite[Lemma~3.3]{Qi26}.

\section{Failure of monotone convergence}\label{sec:monotone}

In this section, we prove Theorem~\ref{thm:monotone}. Throughout this section $R=\mathbb{C}[[x,y]]$, $\fm=(x,y)$, and $n=\dim R=2$. We use the terminology recalled in Sections~\ref{sec:introduction} and~\ref{sec:preliminaries}.

\begin{cons}\label{cons:sequence}
Fix pairwise distinct $t_i\in\mathbb{C}$ for $i\geq 1$, and set
\[
\ell_i=y-t_ix,\qquad M_i=2^i.
\]
Since $(x,\ell_i)$ is a regular system of parameters of $R$, every $0\neq f\in R$ has a unique expansion $f=\sum_{a,b\geq 0}c_{a,b}\,x^a\ell_i^{\,b}$ with $c_{a,b}\in\mathbb{C}$. Define the monomial valuation
\[
v_i(f)=\min\{a+(1+M_i)b\mid c_{a,b}\neq 0\},\qquad v_i(0)=+\infty,
\]
of weights $(1,1+M_i)$ in the parameters $(x,\ell_i)$, with valuation filtration $\fa_\lambda(v_i)=\{f\in R\mid v_i(f)\geq\lambda\}$. For each integer $k\geq 1$ put
\[
\fa_{\bullet,k}=\bigcap_{i=1}^k\fa_\bullet(v_i).
\]
\end{cons}

\begin{lem}\label{lem:counterexample-saturated}
Each $v_i$ is centered at $\fm$ with $e(\fa_\bullet(v_i))=\tfrac{1}{M_i+1}>0$. Consequently every $\fa_{\bullet,k}$ is a linearly bounded saturated filtration, and $\fa_{\bullet,k+1}\subseteq\fa_{\bullet,k}$.
\end{lem}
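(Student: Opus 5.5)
Since $(x,\ell_i)$ is a regular system of parameters, I would pass to the coordinate change $u=x$, $w=\ell_i$, which gives an automorphism of $R=\mathbb{C}[[u,w]]$. In these coordinates $v_i$ is the standard monomial valuation with weights $(1,1+M_i)$. It is a valuation because the initial form with respect to a positive weight is multiplicative on $\mathbb{C}[u,w]$, a domain. Moreover $v_i(f)>0$ exactly when $c_{0,0}=0$, i.e.\ when $f\in\fm$, so $v_i$ is centered at $\fm$. Each $\fa_\lambda(v_i)$ is the ideal spanned (in the completed sense) by the monomials $u^aw^b$ with $a+(1+M_i)b\geq\lambda$. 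This ideal contains $\fm^{\lceil\lambda\rceil}$ because $a+(1+M_i)b\geq a+b$, so it is $\fm$-primary. The remaining filtration axioms are immediate: the family is decreasing, $\fa_\lambda\fa_\mu\subseteq\fa_{\lambda+\mu}$, and left continuity holds since $v_i$ takes integer values.

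Next I would compute the multiplicity. $R/\fa_m(v_i)$ has a $\mathbb{C}$-basis given by the monomials with $a+(1+M_i)b<m$. Their number is the lattice-point count of a triangle with legs $m$ and $m/(1+M_i)$, namely $\frac{m^2}{2(1+M_i)}+O(m)$. Dividing by $m^2/2!$ and letting $m\to\infty$ gives $e(\fa_\bullet(v_i))=\frac{1}{M_i+1}>0$. This is the step that needs a little care: the monomial basis statement requires that $\fa_m(v_i)$ is exactly the monomial ideal described above, which follows from the definition of $v_i$ as the minimum over the support of the expansion.

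For the consequences, each $\fa_\bullet(v_i)$ is a valuation filtration of a valuation centered at $\fm$ with positive volume, hence saturated by \cite[Proposition~2.20]{Qi26}. The finite intersection $\fa_{\bullet,k}$ is an $\fm$-filtration, and it is linearly bounded because it contains $\fm^{\lceil\lambda\rceil}$ termwise. Concretely, its multiplicity is positive since $\ell(R/\fa_{m,k})\leq\sum_{i\le k}\ell(R/\fa_m(v_i))$. Saturation then follows from \cite[Corollary~2.21]{Qi26}, or directly from the fact that $\fa_{\bullet,k}$ is by definition an intersection of valuation filtrations and then applying \cite[Proposition~2.20]{Qi26}. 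Finally, $\fa_{\bullet,k+1}=\fa_{\bullet,k}\cap\fa_\bullet(v_{k+1})\subseteq\fa_{\bullet,k}$ is immediate from the definition.
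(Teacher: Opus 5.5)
Your check that $v_i$ is a valuation centered at $\fm$ is correct. Your direct lattice-point count of $\{a+(1+M_i)b<m\}$ is also correct, and it is a self-contained replacement for the paper's appeal to the $k=1$ case of Lemma~\ref{lem:counterexample-multiplicity}.

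\textbf{The gap.} The step proving that $\fa_{\bullet,k}$ is linearly bounded argues in the wrong direction, both times. Linear boundedness is an \emph{upper} containment $\fa_{\bullet,k}\subseteq\fm^{\alpha\bullet}$ for some $\alpha>0$, equivalently positive multiplicity.
\begin{enumerate}
\item The inclusion $\fm^{\lceil\lambda\rceil}\subseteq(\fa_{\bullet,k})_\lambda$ only gives finite colengths and the bound $e(\fa_{\bullet,k})\leq e(\fm_\bullet)=1$. The degenerate $\fm$-filtration with $\fa_\lambda=\fm$ for all $\lambda>0$ also satisfies it, yet has multiplicity $0$.
\item Subadditivity, $\ell\bigl(R/(\fa_{\bullet,k})_m\bigr)\leq\sum_{i\leq k}\ell\bigl(R/\fa_m(v_i)\bigr)$, likewise yields only the upper bound $e(\fa_{\bullet,k})\leq\sum_{i\leq k}\frac{1}{M_i+1}$, not positivity.
\end{enumerate}
Both \cite[Proposition~2.20]{Qi26} and \cite[Corollary~2.21]{Qi26} concern filtrations already known to be linearly bounded. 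So your saturation conclusion currently rests on this unproved step.

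\textbf{The repair.} Use the containment the other way round: $\fa_{\bullet,k}\subseteq\fa_\bullet(v_1)$. Moreover $v_1(f)\leq 3\ord_\fm(f)$: your linear coordinate change does not alter $\ord_\fm$, and some monomial $u^aw^b$ with $a+b=\ord_\fm(f)$ occurs in $f$, with weight $a+3b\leq 3(a+b)$. Hence $(\fa_{\bullet,k})_\lambda\subseteq\fm^{\lceil\lambda/3\rceil}$, which is exactly the paper's argument. In colength terms, $(\fa_{\bullet,k})_m\subseteq\fa_m(v_1)$ gives $\ell\bigl(R/(\fa_{\bullet,k})_m\bigr)\geq\ell\bigl(R/\fa_m(v_1)\bigr)$, hence $e(\fa_{\bullet,k})\geq\frac13>0$.
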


\begin{proof}
For $0\neq g\in R$ one has $v_i(g)\geq\ord_\fm(g)$, with $v_i(g)>0$ exactly when $g\in\fm$; hence $v_i$ is centered at $\fm$. The single-valuation multiplicity is computed as in Lemma~\ref{lem:counterexample-multiplicity} below with $k=1$ and weight $M_i$, giving $e(\fa_\bullet(v_i))=\tfrac{(1/M_i)}{1+(1/M_i)}=\tfrac{1}{M_i+1}>0$, so $v_i$ is a valuation of positive volume. By \cite[Proposition~2.20]{Qi26} the finite intersection $\fa_{\bullet,k}=\bigcap_{i=1}^k\fa_\bullet(v_i)$ is therefore saturated, provided it is linearly bounded. The latter holds: for $i=1$ the weight is $1+M_1=3$, so $v_1(f)\leq 3\ord_\fm(f)$ for all $f$, whence $f\in(\fa_{\bullet,k})_\lambda$ forces $v_1(f)\geq\lambda$ and thus $f\in\fm^{\lceil\lambda/3\rceil}$. Finally $\fa_{\bullet,k+1}=\fa_{\bullet,k}\cap\fa_\bullet(v_{k+1})\subseteq\fa_{\bullet,k}$.
\end{proof}

\begin{lem}\label{lem:counterexample-multiplicity}
For every integer $k\geq 1$,
\[
e(\fa_{\bullet,k})=\frac{S_k}{1+S_k}=\frac{1-2^{-k}}{2-2^{-k}},\qquad S_k:=\sum_{i=1}^k\frac{1}{M_i}=1-2^{-k}.
\]
\end{lem}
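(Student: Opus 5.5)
We compute $\ell(R/(\fa_{\bullet,k})_m)$ for large integers $m$ and read off the coefficient of $m^2/2$. The plan is to express membership in $(\fa_{\bullet,k})_m$ through conditions at the $k$ distinct tangent directions.

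\textbf{Step 1: local conditions for one valuation.} For $f\in R$, write $f=\sum_{j\ge0} f_j$ with $f_j$ homogeneous of degree $j$ in $(x,y)$. Since $v_i(f)\ge\ord_\fm(f)$ and $\ell_i$ has weight $1+M_i$, the condition $v_i(f)\ge m$ has a concrete form. For every $j<m$, the form $f_j$ must be divisible by $\ell_i^{\,b_j}$, where $b_j=\lceil (m-j)/M_i\rceil$ (write $x^a\ell_i^b$ with $a+b=j$ and $a+(1+M_i)b\ge m$). Forms of degree $\ge m$ impose nothing.

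One subtlety: $f$ is expanded in $x,\ell_i$, and the homogeneous components in $(x,y)$ agree with those in $(x,\ell_i)$, since the change of variables is linear. So the homogeneous decomposition is the same for all $i$, and the conditions from different $v_i$ act on the same pieces $f_j$. This is the key simplification.

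\textbf{Step 2: intersect and count.} Because the $\ell_i$ are pairwise coprime linear forms, $f_j$ satisfies all $k$ conditions exactly when it is divisible by $\prod_{i=1}^k\ell_i^{\,b_{i,j}}$, where $b_{i,j}=\lceil (m-j)/M_i\rceil$. The space of forms of degree $j$ has dimension $j+1$. The divisible ones form a subspace of dimension $\max(0,\,j+1-\sum_i b_{i,j})$.

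Hence the ideal $(\fa_{\bullet,k})_m$ is homogeneous, and
\[
\ell(R/(\fa_{\bullet,k})_m)=\sum_{j\ge0}\min\Bigl(j+1,\ \sum_{i=1}^k b_{i,j}\Bigr),
\]
where the terms with $j\ge m$ vanish. Note $\sum_i b_{i,j}\approx (m-j)S_k$.

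\textbf{Step 3: asymptotics.} Rescale $j=sm$. Then
\[
\ell(R/(\fa_{\bullet,k})_m)\approx m^2\int_0^1\min\bigl(s,(1-s)S_k\bigr)\,ds.
\]
The integrand switches from $s$ to $(1-s)S_k$ at $s_0=S_k/(1+S_k)$, and the integral equals $s_0/2$. Multiplying by the normalization $2/m^2$ gives $e=s_0=S_k/(1+S_k)$. Substituting $S_k=1-2^{-k}$ yields the stated closed form.

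The ceilings and the $+1$ change the count by $O(m)$, so they do not affect the limit. The same computation with $k=1$ gives $1/(M_i+1)$, which is what Lemma~\ref{lem:counterexample-saturated} uses.

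\textbf{Main obstacle.} The delicate step is Step 1. One must justify that $v_i(f)\ge m$ is equivalent to componentwise divisibility of the homogeneous pieces of degree $<m$. This relies on $v_i$ being monomial in a linear change of coordinates: the monomial $x^a\ell_i^b$ is homogeneous of degree $a+b$, so $v_i(f)=\min_j v_i(f_j)$. With that established, the rest is linear algebra over coprime linear factors and a Riemann sum.
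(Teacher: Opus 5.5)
Your proposal is correct and follows essentially the same route as the paper: you reduce $v_i(f)\ge m$ to divisibility of the degree-$j$ homogeneous components ($j<m$) by $\ell_i^{\lceil (m-j)/M_i\rceil}$, combine the $k$ conditions using coprimality of the $\ell_i$ to get $\ell(R/(\fa_{\bullet,k})_m)=\sum_{j<m}\min(j+1,\sum_i b_{i,j})$, and pass to the Riemann integral $\int_0^1\min\{s,S_k(1-s)\}\,ds=S_k/(2(1+S_k))$. One cosmetic fix: the raw ceiling $\lceil (m-j)/M_i\rceil$ is negative once $j$ is large enough, so either truncate it as $\max(0,\lceil (m-j)/M_i\rceil)$, as the paper does, or state the sum only over $j<m$.
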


\begin{proof}
Fix $k$ and an integer $q\geq 1$. For a homogeneous form $P$ of degree $d$, write $P=\sum_{b=0}^d c_b\,x^{d-b}\ell_i^{\,b}$ in the parameters $(x,\ell_i)$; since $v_i(x^{d-b}\ell_i^{\,b})=d+M_ib$, we have
\[
v_i(P)\geq q\iff \ell_i^{\,r_{i,d,q}}\mid P,\qquad r_{i,d,q}:=\max\left(0,\Bigl\lceil\frac{q-d}{M_i}\Bigr\rceil\right).
\]
As $\ell_1,\dots,\ell_k$ are pairwise nonproportional linear forms, $v_i(P)\geq q$ for all $1\leq i\leq k$ if and only if $\prod_{i=1}^k\ell_i^{\,r_{i,d,q}}\mid P$, so the subspace of degree-$d$ forms satisfying all $k$ conditions has codimension $\min\bigl(d+1,\sum_{i=1}^k r_{i,d,q}\bigr)$ in the $(d+1)$-dimensional space of degree-$d$ forms.

Write $f=\sum_{d\geq 0}P_d$ for the decomposition into homogeneous parts. Since $v_i(f)=\min_d v_i(P_d)$, the condition $v_i(f)\geq q$ for all $i\leq k$ is imposed degree by degree. Moreover $\fm^q\subseteq(\fa_{\bullet,k})_q$, so only the degrees $0\leq d<q$ contribute to $R/(\fa_{\bullet,k})_q$, and
\[
\ell\bigl(R/(\fa_{\bullet,k})_q\bigr)=\sum_{d=0}^{q-1}\min\Bigl(d+1,\ \sum_{i=1}^k\Bigl\lceil\frac{q-d}{M_i}\Bigr\rceil\Bigr).
\]
The ceiling sum differs from $S_k(q-d)$ by at most $k$ uniformly in $d$, so the total error is $O(kq)$ and is negligible after division by $q^2$. Writing $t=d/q$, the Riemann sum converges:
\[
\lim_{q\to\infty}\frac{\ell(R/(\fa_{\bullet,k})_q)}{q^2}=\int_0^1\min\{t,\,S_k(1-t)\}\,dt=\frac{S_k}{2(1+S_k)},
\]
where the two integrands meet at $t=S_k/(1+S_k)$. Since $n=2$, the normalization gives
\[
e(\fa_{\bullet,k})=\lim_{q\to\infty}\frac{\ell(R/(\fa_{\bullet,k})_q)}{q^2/2}=\frac{S_k}{1+S_k}=\frac{1-2^{-k}}{2-2^{-k}}.\qedhere
\]
\end{proof}

\begin{lem}\label{lem:counterexample-intersection}
The termwise intersection is the $\fm$-adic filtration:
\[
\bigcap_{k\geq 1}\fa_{\bullet,k}=\fm_\bullet,\qquad (\fm_\bullet)_\lambda=\fm^{\lceil\lambda\rceil}.
\]
Moreover $\fm_\bullet\in\Fil^s_{R,\fm}$ and $e(\fm_\bullet)=1$.
\end{lem}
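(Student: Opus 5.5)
The plan is to prove the two inclusions $\fm_\bullet\subseteq\bigcap_k\fa_{\bullet,k}$ and $\bigcap_k\fa_{\bullet,k}\subseteq\fm_\bullet$ level by level, and then to check saturation and compute the multiplicity.

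\textbf{The inclusion $\fm^{\lceil\lambda\rceil}\subseteq(\fa_{\bullet,k})_\lambda$.} This is immediate from the first proof above. For every $i$ and every $f\neq 0$ we have $v_i(f)\ge\ord_\fm(f)$. Hence $f\in\fm^{\lceil\lambda\rceil}$ gives $v_i(f)\ge\lceil\lambda\rceil\ge\lambda$ for all $i$, so $f$ lies in every $(\fa_{\bullet,k})_\lambda$.

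\textbf{The reverse inclusion.} Fix $\lambda>0$ and $f\in\bigcap_k(\fa_{\bullet,k})_\lambda$, so that $v_i(f)\ge\lambda$ for all $i\ge1$. Set $q=\lceil\lambda\rceil$; since $v_i$ takes integer values, in fact $v_i(f)\ge q$ for all $i$. Write $f=\sum_d P_d$ in homogeneous parts. As in the proof of Lemma~\ref{lem:counterexample-multiplicity}, we have $v_i(f)=\min_d v_i(P_d)$. Strictly this equality should be justified: the expansion in $(x,\ell_i)$ is a linear change of coordinates preserving degree, so the monomials of distinct $P_d$ do not interact. It follows that $v_i(P_d)\ge q$ for every $d$ and every $i$. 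Now take $d<q$. Then $r_{i,d,q}\ge1$, so $\ell_i\mid P_d$ for every $i\ge1$. The $\ell_i$ are pairwise nonproportional, so $P_d$ is divisible by $\prod_{i=1}^{N}\ell_i$ for every $N$. Taking $N>d$ forces $P_d=0$. Therefore $f\in\fm^q$. This is the only real step, and its key point is that infinitely many distinct directions kill every low-degree form.

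\textbf{Saturation and multiplicity.} By the computation above, $\fm_\bullet$ is itself an intersection of the saturated filtrations $\fa_{\bullet,k}$. It is also clearly a linearly bounded $\fm$-filtration: it is decreasing, multiplicative, and left continuous, because $\lceil\cdot\rceil$ gives $\fm^{\lceil\lambda\rceil}=\bigcap_{\mu<\lambda}\fm^{\lceil\mu\rceil}$. So \cite[Corollary~2.21]{Qi26} shows that $\fm_\bullet\in\Fil^s_{R,\fm}$. Alternatively, $\fm_\bullet=\fa_\bullet(\ord_\fm)$, and $\ord_\fm$ is a valuation because $\operatorname{gr}_\fm R=\mathbb{C}[x,y]$ is a domain. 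Finally, $\ell(R/\fm^m)=\binom{m+1}{2}$, and dividing by $m^2/2$ gives $e(\fm_\bullet)=1$.
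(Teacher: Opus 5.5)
Your proposal is correct and follows the paper's argument: both directions rest on $v_i\ge\ord_\fm$ and on the fact that a nonzero form of degree $d$ cannot be divisible by more than $d$ pairwise nonproportional linear forms. The paper phrases the reverse inclusion contrapositively, picking one $i$ with $\ell_i\nmid P_d$ for the initial form $P_d$ so that $v_i(f)=d<\lambda$, whereas you directly kill every homogeneous part of degree $<\lceil\lambda\rceil$; for saturation and $e(\fm_\bullet)=1$ the paper uses your alternative route ($\fm_\bullet=\fa_\bullet(\ord_\fm)$ with \cite[Proposition~2.20]{Qi26}) and the same length count.
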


\begin{proof}
Let $\fb_\lambda=\bigcap_{k\geq 1}(\fa_{\bullet,k})_\lambda=\bigcap_{i\geq 1}\{f\mid v_i(f)\geq\lambda\}$. Since $v_i\geq\ord_\fm$, we have $\fm^{\lceil\lambda\rceil}\subseteq\fb_\lambda$. Conversely, let $0\neq f\notin\fm^{\lceil\lambda\rceil}$ and set $d=\ord_\fm(f)<\lambda$, with lowest homogeneous part $P_d$. As $P_d$ has only finitely many linear factors while the $\ell_i$ point in infinitely many distinct directions, we may choose $i$ with $\ell_i\nmid P_d$; then the coefficient of $\ell_i^{\,0}$ in the $(x,\ell_i)$-expansion of $P_d$ is nonzero, so $v_i(P_d)=d$, while every higher homogeneous part has $v_i$-value at least $d+1$. Hence $v_i(f)=d<\lambda$ and $f\notin\fb_\lambda$. This proves $\fb_\lambda=\fm^{\lceil\lambda\rceil}$.

The $\fm$-adic filtration $\fm_\bullet$ is the valuation filtration of $\ord_\fm$: for nonzero power series the lowest homogeneous part of a product is the product of the lowest homogeneous parts, so $\ord_\fm(fg)=\ord_\fm(f)+\ord_\fm(g)$ and $\fm^{\lceil\lambda\rceil}=\{f\mid\ord_\fm(f)\geq\lambda\}$. Thus $\ord_\fm$ is a valuation centered at $\fm$, and from $\ell(R/\fm^q)=q(q+1)/2$ we get $e(\fm_\bullet)=\lim_{q\to\infty}\ell(R/\fm^q)/(q^2/2)=1>0$. By \cite[Proposition~2.20]{Qi26}, $\fm_\bullet\in\Fil^s_{R,\fm}$.
\end{proof}

\begin{proof}[Proof of Theorem~\ref{thm:monotone}]
By Lemma~\ref{lem:counterexample-saturated} the sequence $\{\fa_{\bullet,k}\}_{k\geq 1}$ of Construction~\ref{cons:sequence} is a decreasing sequence in $\Fil^s_{R,\fm}$, and by Lemma~\ref{lem:counterexample-intersection} its termwise intersection is $\fm_\bullet\in\Fil^s_{R,\fm}$. Since $\fm_\bullet=\bigcap_{i\geq 1}\fa_\bullet(v_i)\subseteq\fa_{\bullet,k}$, the two filtrations are comparable, so the containment formula \eqref{eq:containment} and Lemmas~\ref{lem:counterexample-multiplicity} and~\ref{lem:counterexample-intersection} give
\[
d_1(\fm_\bullet,\fa_{\bullet,k})=e(\fm_\bullet)-e(\fa_{\bullet,k})=1-\frac{1-2^{-k}}{2-2^{-k}}\xrightarrow[\ k\to\infty\ ]{}1-\frac12=\frac12\neq 0.
\]
Hence the decreasing sequence $\{\fa_{\bullet,k}\}_{k\geq 1}$ in $\Fil^s_{R,\fm}$, whose termwise intersection $\fm_\bullet$ again lies in $\Fil^s_{R,\fm}$, does not converge to that intersection in $d_1$. This answers Question~\ref{ques:monotone} in the negative.
\end{proof}

\section{Contractibility}\label{sec:contractibility}

In this section, we prove Theorem~\ref{thm:contractible}. Throughout this section $(R,\fm)$ is a Noetherian analytically irreducible local domain of dimension $n$, and we use the terminology and facts recalled in Sections~\ref{sec:introduction} and~\ref{sec:preliminaries}. Let
\[
o_\bullet:=\widetilde{\fm^\bullet}
\]
be the saturation of the $\fm$-adic filtration $\fm_\lambda=\fm^{\lceil\lambda\rceil}$, that is, the intersection of all valuation filtrations centered at $\fm$ that contain $\fm_\bullet$. Being such an intersection, $o_\bullet$ is saturated, and it is the canonical element of $\Fil^s_{R,\fm}$ \cite[\S2.2]{Qi26}; in particular $o_\bullet\in\Fil^s_{R,\fm}$. As an element of $\Fil^s_{R,\fm}\subseteq\Fil_{R,\fm}$, the filtration $o_\bullet$ is linearly bounded, hence of positive multiplicity, so $0<e(o_\bullet)$. Throughout this section we use that $R$ is analytically irreducible only through \cite[Theorem~1.1]{Qi26}, which guarantees that $d_1$ restricts to a genuine metric on $\Fil^s_{R,\fm}$.

\begin{lem}[Scaling]\label{lem:scaling}
For $c>0$, define $T_c(\fa_\bullet)_\lambda=\fa_{c\lambda}$. Then $T_c$ preserves $\Fil_{R,\fm}$ and $\Fil^s_{R,\fm}$, and
\[
T_c(\fa_\bullet\cap\fb_\bullet)=T_c\fa_\bullet\cap T_c\fb_\bullet,\qquad
e(T_c\fa_\bullet)=c^n e(\fa_\bullet),\qquad
d_1(T_c\fa_\bullet,T_c\fb_\bullet)=c^n d_1(\fa_\bullet,\fb_\bullet).
\]
\end{lem}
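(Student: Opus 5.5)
We verify the filtration axioms for $T_c\fa_\bullet$ directly, then obtain everything else from the definitions. Each $(T_c\fa_\bullet)_\lambda=\fa_{c\lambda}$ is $\fm$-primary for $\lambda>0$, and $(T_c\fa_\bullet)_0=\fa_0=R$. The family is decreasing in $\lambda$, and
\[
\fa_{c\lambda}\fa_{c\mu}\subseteq\fa_{c(\lambda+\mu)}.
\]
Left continuity $\bigcap_{\mu<\lambda}\fa_{c\mu}=\fa_{c\lambda}$ holds because $\mu\mapsto c\mu$ is an increasing bijection of $\mathbb{R}^+$. Linear boundedness also transfers. If $\fa_\lambda\subseteq\fm^{\lceil\lambda/C\rceil}$, then $\fa_{c\lambda}\subseteq\fm^{\lceil c\lambda/C\rceil}$, and the reverse bound $\fm^{\lceil C'\lambda\rceil}\subseteq\fa_\lambda$ rescales in the same way. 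Hence $T_c$ preserves $\Fil_{R,\fm}$. The intersection identity is immediate termwise:
\[
T_c(\fa\cap\fb)_\lambda=\fa_{c\lambda}\cap\fb_{c\lambda}.
\]
The same computation works for arbitrary intersections.

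For saturatedness we use \cite[Proposition~2.20]{Qi26}. Write $\fa_\bullet=\bigcap_j\fa_\bullet(v_j)$. Then
\[
T_c\fa_\bullet(v_j)_\lambda=\{f: v_j(f)\ge c\lambda\}=\fa_\lambda(c^{-1}v_j).
\]
Here $c^{-1}v_j$ is again a valuation centered at $\fm$, and its volume is positive because its volume is $c^{n}$ times that of $v_j$; this is the multiplicity scaling below applied to a single valuation filtration. By the arbitrary-intersection identity, $T_c\fa_\bullet=\bigcap_j\fa_\bullet(c^{-1}v_j)$, so $T_c\fa_\bullet$ is saturated. Alternatively, one can invoke \cite[Corollary~2.21]{Qi26}, since $T_c\fa_\bullet$ is linearly bounded.

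The main step is the multiplicity scaling $e(T_c\fa_\bullet)=c^ne(\fa_\bullet)$. The defining limit is taken over integers $m$, but $cm$ is generally not an integer, so we cannot simply substitute. We would use the fact that for linearly bounded filtrations the limit
\[
\lim_{t\to\infty}\frac{\ell(R/\fa_t)}{t^n/n!}
\]
exists along real $t\to\infty$ (\cite[\S2.1.5]{Qi26}, \cite{LM09}). If only integer limits are available, we sandwich instead. Monotonicity gives
\[
\ell(R/\fa_{\lfloor cm\rfloor})\le\ell(R/\fa_{cm})\le\ell(R/\fa_{\lceil cm\rceil}).
\]
Dividing by $m^n/n!$, both outer terms tend to $c^ne(\fa_\bullet)$, since $\lfloor cm\rfloor/m\to c$ and the integer limit exists. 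This settles the case where $c$ is such that $\lfloor cm\rfloor$ runs over a sufficiently dense set of integers. In general, $\lfloor cm\rfloor$ ranges over a subsequence of the integers that tends to infinity, and a subsequence of a convergent sequence has the same limit, so the sandwich closes for every $c>0$.

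Finally, the $d_1$ identity follows by substituting the intersection identity and the multiplicity scaling into \eqref{eq:d1}:
\[
d_1(T_c\fa,T_c\fb)=2c^ne(\fa\cap\fb)-c^ne(\fa)-c^ne(\fb)=c^n\,d_1(\fa_\bullet,\fb_\bullet).
\]
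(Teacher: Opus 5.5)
Your proposal is correct and follows essentially the same route as the paper: the termwise intersection identity, rescaling each valuation $v\mapsto v/c$ to show $T_c$ preserves saturatedness, the sandwich $\ell(R/\fa_{\lfloor cm\rfloor})\leq\ell(R/\fa_{cm})\leq\ell(R/\fa_{\lceil cm\rceil})$ for the multiplicity, and substitution into the definition of $d_1$. Your aside about $\lfloor cm\rfloor$ running over a ``sufficiently dense'' set is unnecessary, and ``subsequence'' is slightly loose when $c<1$ because values repeat. All that is needed is that $\ell(R/\fa_k)/(k^n/n!)\to e(\fa_\bullet)$ along any integer sequence $k_m\to\infty$, together with $\lfloor cm\rfloor/m\to c$.
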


\begin{proof}
By definition $T_c\fa_\bullet$ is again an $\fm$-filtration. If $\fa_\bullet\subseteq\fm^{\alpha\bullet}$ for some $\alpha>0$, then $T_c\fa_\bullet\subseteq\fm^{\alpha c\bullet}$, so $T_c\fa_\bullet$ is linearly bounded.

Assume that $\fa_\bullet$ is saturated. By the characterization of saturated filtrations as intersections of valuation filtrations, there is a nonempty set $\Sigma$ of valuations centered at $\fm$ with $\fa_\bullet=\bigcap_{v\in\Sigma}\fa_\bullet(v)$. For each $v\in\Sigma$,
\[
T_c\bigl(\fa_\bullet(v)\bigr)_\lambda
=\{f\mid v(f)\geq c\lambda\}
=\{f\mid (v/c)(f)\geq\lambda\}
=\fa_\bullet(v/c)_\lambda,
\]
and $v/c$ is again centered at $\fm$. Hence $T_c\fa_\bullet=\bigcap_{v\in\Sigma}\fa_\bullet(v/c)$ is again an intersection of valuation filtrations, so $T_c\fa_\bullet$ is saturated.

The intersection identity holds termwise. For the multiplicity, monotonicity of $\ell(R/\,\cdot\,)$ gives
\[
\ell(R/\fa_{\lfloor cm\rfloor})\leq\ell(R/\fa_{cm})\leq\ell(R/\fa_{\lceil cm\rceil}),
\]
and after division by $m^n/n!$ the two outer terms converge to $c^n e(\fa_\bullet)$, since $\lfloor cm\rfloor/m,\lceil cm\rceil/m\to c$. Since $\ell(R/\fa_{cm})$ is sandwiched between the two outer terms, it has the same limit; hence $e(T_c\fa_\bullet)=c^n e(\fa_\bullet)$. The formula for $d_1$ follows from \eqref{eq:d1}, the intersection identity, and the multiplicity-scaling formula.
\end{proof}

\begin{lem}[Multiplicity estimates]\label{lem:multiplicity-estimates}
For all $\fp_\bullet,\fq_\bullet\in\Fil_{R,\fm}$,
\[
e(\fp_\bullet\cap\fq_\bullet)\leq e(\fp_\bullet)+e(\fq_\bullet),
\qquad\text{and}\qquad
|e(\fp_\bullet)-e(\fq_\bullet)|\leq d_1(\fp_\bullet,\fq_\bullet).
\]
\end{lem}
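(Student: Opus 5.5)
The second inequality is the quicker one, so I will prove it first. By the definition \eqref{eq:d1}, $d_1(\fp_\bullet,\fq_\bullet)=2e(\fp_\bullet\cap\fq_\bullet)-e(\fp_\bullet)-e(\fq_\bullet)$. Since $\fp_\bullet\cap\fq_\bullet\subseteq\fp_\bullet$ termwise, we have $\ell(R/(\fp\cap\fq)_m)\geq\ell(R/\fp_m)$ for every $m$, and passing to the limit gives $e(\fp_\bullet\cap\fq_\bullet)\geq e(\fp_\bullet)$; the same holds with $\fq_\bullet$ in place of $\fp_\bullet$. Here I need $\fp_\bullet\cap\fq_\bullet$ to lie in $\Fil_{R,\fm}$, so that its multiplicity exists. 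It is an $\fm$-filtration: its ideals are $\fm$-primary because $\fp_\lambda\fq_\lambda\subseteq\fp_\lambda\cap\fq_\lambda$, and it is left-continuous because intersections commute. It is linearly bounded because it is contained in $\fp_\bullet$. Then
\[
d_1(\fp_\bullet,\fq_\bullet)=\bigl(e(\fp_\bullet\cap\fq_\bullet)-e(\fp_\bullet)\bigr)+\bigl(e(\fp_\bullet\cap\fq_\bullet)-e(\fq_\bullet)\bigr)\geq e(\fp_\bullet\cap\fq_\bullet)-e(\fq_\bullet)\geq e(\fp_\bullet)-e(\fq_\bullet),
\]
and symmetry gives $|e(\fp_\bullet)-e(\fq_\bullet)|\leq d_1(\fp_\bullet,\fq_\bullet)$. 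Alternatively, this follows from the triangle inequality together with \eqref{eq:containment}.

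For the first inequality I will work at the level of lengths. For each $m$, the map $R\to R/\fp_m\oplus R/\fq_m$ has kernel $\fp_m\cap\fq_m$. This gives an injection $R/(\fp_m\cap\fq_m)\hookrightarrow R/\fp_m\oplus R/\fq_m$, and hence
\[
\ell\bigl(R/(\fp_m\cap\fq_m)\bigr)\leq\ell(R/\fp_m)+\ell(R/\fq_m).
\]
I will then divide by $m^n/n!$ and let $m\to\infty$. All three limits exist, since all three filtrations lie in $\Fil_{R,\fm}$, so the inequality passes to the limit and yields $e(\fp_\bullet\cap\fq_\bullet)\leq e(\fp_\bullet)+e(\fq_\bullet)$.

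I do not expect a real obstacle. The only point requiring care is that $\fp_\bullet\cap\fq_\bullet\in\Fil_{R,\fm}$, so that its multiplicity is defined as a limit. This is handled by the observation above: the intersection is again an $\fm$-filtration and is linearly bounded by the bound for $\fp_\bullet$.
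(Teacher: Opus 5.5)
Your proof is correct and follows the paper's argument essentially verbatim. The first inequality comes from the injection $R/(\fp_m\cap\fq_m)\hookrightarrow R/\fp_m\oplus R/\fq_m$, and the second from writing $d_1(\fp_\bullet,\fq_\bullet)$ as the sum of the two nonnegative quantities $e(\fp_\bullet\cap\fq_\bullet)-e(\fp_\bullet)$ and $e(\fp_\bullet\cap\fq_\bullet)-e(\fq_\bullet)$. The only cosmetic differences are that you get their nonnegativity directly from monotonicity of length rather than from the containment formula, and that you explicitly check $\fp_\bullet\cap\fq_\bullet\in\Fil_{R,\fm}$, which the paper leaves implicit.
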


\begin{proof}
For every $\lambda>0$ the natural map
\[
R/(\fp_\lambda\cap\fq_\lambda)\longrightarrow R/\fp_\lambda\oplus R/\fq_\lambda
\]
is injective, so $\ell\bigl(R/(\fp_\lambda\cap\fq_\lambda)\bigr)\leq\ell(R/\fp_\lambda)+\ell(R/\fq_\lambda)$. Passing to the asymptotic multiplicity gives $e(\fp_\bullet\cap\fq_\bullet)\leq e(\fp_\bullet)+e(\fq_\bullet)$.

Applying the containment formula \eqref{eq:containment} to $\fp_\bullet\cap\fq_\bullet\subseteq\fp_\bullet$ and $\fp_\bullet\cap\fq_\bullet\subseteq\fq_\bullet$, the quantities
\[
e(\fp_\bullet\cap\fq_\bullet)-e(\fp_\bullet),\qquad e(\fp_\bullet\cap\fq_\bullet)-e(\fq_\bullet)
\]
are nonnegative and, by \eqref{eq:d1}, sum to $d_1(\fp_\bullet,\fq_\bullet)$. Their difference is $e(\fq_\bullet)-e(\fp_\bullet)$, whence $|e(\fp_\bullet)-e(\fq_\bullet)|\leq d_1(\fp_\bullet,\fq_\bullet)$.
\end{proof}

\begin{lem}[Contraction estimates]\label{lem:contraction}
For $0<t<1$, set $H(\fa_\bullet,t)=T_t\fa_\bullet\cap T_{1-t}o_\bullet$. Then $H(\fa_\bullet,t)\in\Fil^s_{R,\fm}$, and:
\begin{enumerate}
\item $d_1\bigl(H(\fa_\bullet,t),H(\fb_\bullet,t)\bigr)\leq t^n\,d_1(\fa_\bullet,\fb_\bullet)$;
\item for $0<s,t<1$,
\[
d_1\bigl(H(\fa_\bullet,s),H(\fa_\bullet,t)\bigr)\leq |s^n-t^n|\,e(\fa_\bullet)+|(1-s)^n-(1-t)^n|\,e(o_\bullet);
\]
\item $d_1\bigl(H(\fa_\bullet,t),o_\bullet\bigr)\leq t^n e(\fa_\bullet)+\{1-(1-t)^n\}\,e(o_\bullet)$;
\item $d_1\bigl(H(\fa_\bullet,t),\fa_\bullet\bigr)\leq (1-t)^n e(o_\bullet)+\{1-t^n\}\,e(\fa_\bullet)$.
\end{enumerate}
\end{lem}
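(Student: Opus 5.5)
First I will check that $H(\fa_\bullet,t)$ lies in $\Fil^s_{R,\fm}$. By Lemma~\ref{lem:scaling}, both $T_t\fa_\bullet$ and $T_{1-t}o_\bullet$ are saturated. Their intersection is an intersection of valuation filtrations, and it is contained in $T_t\fa_\bullet$, which is linearly bounded. Hence it is linearly bounded, and by \cite[Corollary~2.21]{Qi26} (or Proposition~2.20) it is saturated.

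For (1), I will apply the rooftop estimate \eqref{eq:rooftop} with $\fc_\bullet=T_{1-t}o_\bullet$. This gives
\[
d_1(H(\fa_\bullet,t),H(\fb_\bullet,t))\leq d_1(T_t\fa_\bullet,T_t\fb_\bullet)=t^n d_1(\fa_\bullet,\fb_\bullet),
\]
where the equality is Lemma~\ref{lem:scaling}.

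For (2), I will use the triangle inequality through the mixed point $\fp_\bullet:=T_t\fa_\bullet\cap T_{1-s}o_\bullet$. Since $\fp_\bullet$ and $H(\fa_\bullet,s)$ share the factor $T_{1-s}o_\bullet$, the rooftop estimate bounds their distance by $d_1(T_s\fa_\bullet,T_t\fa_\bullet)$. The filtrations $T_s\fa_\bullet$ and $T_t\fa_\bullet$ are comparable: if $s\le t$ then $\fa_{t\lambda}\subseteq\fa_{s\lambda}$. So the containment formula \eqref{eq:containment} and scaling give
\[
d_1(T_s\fa_\bullet,T_t\fa_\bullet)=|t^n-s^n|\,e(\fa_\bullet).
\]
Similarly, $\fp_\bullet$ and $H(\fa_\bullet,t)$ share the factor $T_t\fa_\bullet$, so their distance is at most $d_1(T_{1-s}o_\bullet,T_{1-t}o_\bullet)=|(1-s)^n-(1-t)^n|\,e(o_\bullet)$. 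Adding the two bounds gives (2).

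Items (3) and (4) are degenerate versions of (2), with endpoints $T_0$ and $T_1$ interpreted formally. I will argue them directly.

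For (3), I again use the triangle inequality, this time through $T_{1-t}o_\bullet$. Since $H(\fa_\bullet,t)\subseteq T_{1-t}o_\bullet$, \eqref{eq:containment} gives
\[
d_1(H(\fa_\bullet,t),T_{1-t}o_\bullet)=e(H(\fa_\bullet,t))-(1-t)^n e(o_\bullet)\leq t^n e(\fa_\bullet),
\]
where the inequality uses the first estimate of Lemma~\ref{lem:multiplicity-estimates}. Next, $o_\bullet\subseteq T_{1-t}o_\bullet$, because $o_\lambda\subseteq o_{(1-t)\lambda}$. Hence
\[
d_1(T_{1-t}o_\bullet,o_\bullet)=(1-(1-t)^n)\,e(o_\bullet).
\]
Adding the two bounds gives (3).

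For (4), the argument is symmetric, passing through $T_t\fa_\bullet$. We have $d_1(H(\fa_\bullet,t),T_t\fa_\bullet)\leq(1-t)^n e(o_\bullet)$ by the same reasoning. Also $\fa_\bullet\subseteq T_t\fa_\bullet$, so $d_1(T_t\fa_\bullet,\fa_\bullet)=(1-t^n)e(\fa_\bullet)$.

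I expect no serious obstacle here. The only points needing care are the direction of the containments $T_c\fa_\bullet\supseteq T_{c'}\fa_\bullet$ for $c\le c'$, and the fact that the containment formula and the rooftop inequality are valid on $\Fil_{R,\fm}$ for the intermediate filtrations used above. Those intermediate filtrations are in fact saturated, so all the estimates take place inside $\Fil^s_{R,\fm}$.
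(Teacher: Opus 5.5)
Your proposal is correct and matches the paper's proof essentially step for step. Membership goes through Qi's Corollary~2.21. Part (1) is the rooftop estimate with fixed factor $T_{1-t}o_\bullet$ followed by scaling. Part (2) uses the same mixed filtration $T_t\fa_\bullet\cap T_{1-s}o_\bullet$. Parts (3) and (4) use triangle inequalities through $T_{1-t}o_\bullet$ and $T_t\fa_\bullet$, combined with the containment formula and the bound $e(\fp_\bullet\cap\fq_\bullet)\leq e(\fp_\bullet)+e(\fq_\bullet)$.
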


\begin{proof}
The factors $T_t\fa_\bullet$ and $T_{1-t}o_\bullet$ are saturated and linearly bounded by Lemma~\ref{lem:scaling}. Their termwise intersection is an $\fm$-filtration, is linearly bounded because it is contained in either factor, and is saturated by \cite[Corollary~2.21]{Qi26}; hence $H(\fa_\bullet,t)\in\Fil^s_{R,\fm}$.

(1) By the rooftop estimate \eqref{eq:rooftop} with fixed factor $T_{1-t}o_\bullet$ and then Lemma~\ref{lem:scaling},
\[
d_1\bigl(H(\fa_\bullet,t),H(\fb_\bullet,t)\bigr)\leq d_1(T_t\fa_\bullet,T_t\fb_\bullet)=t^n d_1(\fa_\bullet,\fb_\bullet).
\]

(2) Inserting the intermediate filtration $T_t\fa_\bullet\cap T_{1-s}o_\bullet$ and applying the triangle inequality and \eqref{eq:rooftop} to the two factors bounds the distance by
\[
d_1(T_s\fa_\bullet,T_t\fa_\bullet)+d_1(T_{1-s}o_\bullet,T_{1-t}o_\bullet).
\]
If $s\leq t$ then $T_t\fa_\bullet\subseteq T_s\fa_\bullet$, so by \eqref{eq:containment} and Lemma~\ref{lem:scaling} the first term equals $(t^n-s^n)e(\fa_\bullet)$; the opposite order gives the absolute value, and the same argument for $o_\bullet$ gives the stated bound.

(3) By the triangle inequality,
\[
d_1\bigl(H(\fa_\bullet,t),o_\bullet\bigr)\leq d_1\bigl(H(\fa_\bullet,t),T_{1-t}o_\bullet\bigr)+d_1\bigl(T_{1-t}o_\bullet,o_\bullet\bigr).
\]
Since $H(\fa_\bullet,t)=T_t\fa_\bullet\cap T_{1-t}o_\bullet\subseteq T_{1-t}o_\bullet$, the first term equals $e(H(\fa_\bullet,t))-e(T_{1-t}o_\bullet)$ by \eqref{eq:containment}, which is at most $e(T_t\fa_\bullet)=t^n e(\fa_\bullet)$ by Lemma~\ref{lem:multiplicity-estimates}. As $o_\bullet\subseteq T_{1-t}o_\bullet$, the second term equals $\{1-(1-t)^n\}e(o_\bullet)$ by \eqref{eq:containment} and Lemma~\ref{lem:scaling}.

(4) By the triangle inequality,
\[
d_1\bigl(H(\fa_\bullet,t),\fa_\bullet\bigr)\leq d_1\bigl(H(\fa_\bullet,t),T_t\fa_\bullet\bigr)+d_1\bigl(T_t\fa_\bullet,\fa_\bullet\bigr).
\]
As above the first term is at most $e(T_{1-t}o_\bullet)=(1-t)^n e(o_\bullet)$ by Lemma~\ref{lem:multiplicity-estimates}. Since $\fa_\bullet\subseteq T_t\fa_\bullet$, the second term equals $e(\fa_\bullet)-e(T_t\fa_\bullet)=\{1-t^n\}e(\fa_\bullet)$ by \eqref{eq:containment} and Lemma~\ref{lem:scaling}.
\end{proof}

\begin{proof}[Proof of Theorem~\ref{thm:contractible}]
Define $H\colon\Fil^s_{R,\fm}\times[0,1]\to\Fil^s_{R,\fm}$ by
\[
H(\fa_\bullet,t)=
\begin{cases}
o_\bullet, & t=0,\\[2pt]
T_t\fa_\bullet\cap T_{1-t}o_\bullet, & 0<t<1,\\[2pt]
\fa_\bullet, & t=1.
\end{cases}
\]
By Lemma~\ref{lem:contraction} the values lie in $\Fil^s_{R,\fm}$ for $0<t<1$, and the endpoint values do as well. We prove that $H$ is jointly continuous. Fix $(\fa_{\bullet,0},t_0)$. By Lemma~\ref{lem:multiplicity-estimates}, $|e(\fa_\bullet)-e(\fa_{\bullet,0})|\leq d_1(\fa_\bullet,\fa_{\bullet,0})$, so $e(\fa_\bullet)$ is locally bounded in the $d_1$-metric.

Suppose first $0<t_0<1$. For $t$ near $t_0$, the triangle inequality together with parts (1) and (2) of Lemma~\ref{lem:contraction} gives
\[
d_1\bigl(H(\fa_\bullet,t),H(\fa_{\bullet,0},t_0)\bigr)
\leq t^n d_1(\fa_\bullet,\fa_{\bullet,0})+|t^n-t_0^n|\,e(\fa_{\bullet,0})+|(1-t)^n-(1-t_0)^n|\,e(o_\bullet),
\]
whose right-hand side tends to $0$ as $(\fa_\bullet,t)\to(\fa_{\bullet,0},t_0)$.

At $t_0=0$, part (3) gives, for $0<t<1$,
\[
d_1\bigl(H(\fa_\bullet,t),o_\bullet\bigr)\leq t^n e(\fa_\bullet)+\{1-(1-t)^n\}e(o_\bullet),
\]
which tends to $0$ as $(\fa_\bullet,t)\to(\fa_{\bullet,0},0)$ by local boundedness of $e(\fa_\bullet)$; at $t=0$ the distance is $0$.

At $t_0=1$, part (4) and the triangle inequality give, for $0<t<1$,
\[
d_1\bigl(H(\fa_\bullet,t),\fa_{\bullet,0}\bigr)\leq (1-t)^n e(o_\bullet)+\{1-t^n\}e(\fa_\bullet)+d_1(\fa_\bullet,\fa_{\bullet,0}),
\]
which tends to $0$; at $t=1$ the distance is $d_1(\fa_\bullet,\fa_{\bullet,0})$, which also tends to $0$. Hence $H$ is jointly continuous.

Thus $H$ is a homotopy from the constant map at $o_\bullet$ to the identity of $\Fil^s_{R,\fm}$, so $(\Fil^s_{R,\fm},d_1)$ is contractible.
\end{proof}

\end{document}